\newtheorem{thm}{Theorem}
\newtheorem{lem}[thm]{Lemma}
\newcommand{\N}{{\mathbb N}}
\def\1{{\bf 1}}
\def\N{\mathbb N}
\def\ee{\epsilon}
\title{Null recurrence and transience for a binomial catastrophe model in random environment
\footnote{Keywords: Markov chain, binomial catastrophes, null recurrence, transience.}}
\begin{document}

\maketitle

Luiz Renato Fontes$^1$, Fabio P. Machado \footnote{Instituto de Matemática e Estatística, Universidade de São Paulo, Rua do Matão 1010, 05508-090 São Paulo SP Brasil
E-mail: lrfontes@usp.br and fmachado@ime.usp.br} and Rinaldo B. Schinazi\footnote{Department of Mathematics, University of Colorado, Colorado Springs, CO 80933-7150, USA.
E-mail: rinaldo.schinazi@uccs.edu}

\begin{abstract}
We consider a discrete time population model for which each individual alive at time $n$ survives independently of everybody else at time $n+1$ with probability $\beta_n$. The sequence $(\beta_n)$ is i.i.d. and constitutes our random environment. Moreover, at every time $n$ we add $Z_n$ individuals to the population. The sequence $(Z_n)$ is also i.i.d. We find sufficient conditions for null recurrence and transience (positive recurrence has been addressed by Neuts in \cite{Neuts}). We apply our results to a particular $(Z_n)$ distribution and deterministic $\beta$. This particular case shows a rather unusual phase transition in $\beta$ in  the sense that the Markov chain goes from transience to null recurrence without ever reaching positive recurrence.
\end{abstract}
\section{The model and the results}

Consider a sequence of independent identically distributed (i.i.d. in short) random vectors, $(\beta_1,Z_1),(\beta_2,Z_2),\dots$ The $Z$ distribution is discrete with support on the set of natural numbers $\N$.
The $\beta$ distribution is continuous or discrete with support on $(0,1)$.
Assume that,
\begin{equation}
\label{eq:logmoment} 
  \mu=E(-\ln \beta)\in (0,+\infty).  
\end{equation}

 To ensure that this Markov chain is irreducible we also assume that 
 $$P(\beta\in (0,1))=1\mbox{ and }P(Z\geq 2)>0.$$

For $n\geq 1$, let ${\cal F}_n$ be the $\sigma$-algebra by $(\beta_1,Z_1)\dots (\beta_n,Z_n).$

Let $(X_n)$ be a Markov chain defined as follows. Set $X_0=1$ and $B_0=0$. For $n\geq 1$,
$$X_n=B_{n-1}+Z_n,$$
where the conditional distribution of $B_n$ given ${\cal F}_{n+1}$ is distributed according to a binomial distribution with parameters $X_n$ and $\beta_{n+1}$. 

We may think of $X_n$ as the size of a population at time $n$. At any time $n$ we sample a $\beta_{n+1}$ (the random environment) from a fixed distribution. Each individual alive at time $n$ survives to time $n+1$ independently of everybody else with probability $\beta_{n+1}$ (the binomial catastrophe). The population $X_{n+1}$ at time $n+1$ is made up of the survivors from time $n$  and of an influx of $Z_n$ new individuals.

The next two results state sufficient conditions for recurrence and transience under additional hypotheses.

\medskip

{\bf Hypothesis 1.} If $\beta_1$ and $Z_1$ are independent then the sequences $(\beta_i)_{i\geq 1}$ and $(Z_i)_{i\geq 1}$ are independent.

\begin{thm}
Assume that Hypothesis 1 holds. 
If 
\begin{equation}
    \limsup_{t\to\infty} t P(\ln Z>t)<\mu
\end{equation}
then the Markov chain $(X_n)$ is recurrent.
\end{thm}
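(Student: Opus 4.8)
The plan is to prove recurrence by a Foster (Lyapunov) argument. Since $(X_n)$ is irreducible, by Foster's criterion it is enough to exhibit a function $V:\N\to[0,\infty)$ with $V(x)\to\infty$ and a finite set $F\subset\N$ such that $E[V(X_{n+1})\mid X_n=x]\le V(x)$ for all $x\notin F$; crucially this criterion does not force positive recurrence. I would take $V(x)=\ln\ln(x+c_0)$ with $c_0$ a fixed constant large enough (e.g.\ $c_0=2e^{e}$) that $V$ is positive, strictly increasing, strictly concave and divergent on $[0,\infty)\supset\N$. Concavity enters at once: given $\beta_{n+1}$ and $Z_{n+1}$, the variable $B_n$ is $\mathrm{Bin}(x,\beta_{n+1})$ with mean $x\beta_{n+1}$ and is independent of $Z_{n+1}$, so Jensen's inequality gives
\[
E[V(X_{n+1})\mid X_n=x]=E\bigl[V(B_n+Z_{n+1})\bigr]\le E\bigl[V(x\beta_{n+1}+Z_{n+1})\bigr].
\]
This last expectation is finite, because $\ln\ln(a+b)\le\ln2+\ln\ln a+\ln\ln b$ for $a,b\ge e^{e}$ and $E[\ln\ln(Z+c_0)]<\infty$ under the hypothesis; so the task reduces to showing that
\[
D(x):=E\bigl[\ln\ln(x\beta+Z+c_0)\bigr]-\ln\ln(x+c_0)
\]
is strictly negative for all large $x$, where $(\beta,Z)\overset{d}{=}(\beta_1,Z_1)$. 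Fix $\varepsilon>0$ with $\limsup_{t}tP(\ln Z>t)=\mu-2\varepsilon$, so that $P(\ln Z>t)\le(\mu-\varepsilon)/t$ for $t$ large, and set $y=x+c_0$, $L=\ln y$.

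The heart of the matter is to split $D(x)$ according to whether the incoming batch $Z$ is small or large relative to the surviving population $x\beta$. On $\{Z<x\beta/L^{2}\}$ one has $x\beta+Z+c_0\le(\beta y+c_0)(1+L^{-2})$, and since $\ln(\beta y+c_0)=L+\ln\beta+\ln(1+c_0/(\beta y))$, repeated use of $\ln(1+u)\le u$ gives
\[
\ln\ln(x\beta+Z+c_0)-\ln L\ \le\ \frac{\ln\beta+\ln\!\bigl(1+c_0/(\beta y)\bigr)}{L}+O(L^{-2}).
\]
As $x\to\infty$, $\mathbf 1\{Z<x\beta/L^{2}\}\to1$ and $\ln(1+c_0/(\beta y))\to0$ almost surely, while $-\ln\beta$ is integrable by \eqref{eq:logmoment} and hence so is $\ln(1+c_0/\beta)\le\ln(1+c_0)-\ln\beta$; dominated convergence then shows that this part of $D(x)$ equals $(-\mu+o(1))/L$. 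On the complementary event, in the main range $Z\ge c_0$ one has $x\beta+Z+c_0\le(L^{2}+2)Z$, so the increment is at most $\ln\bigl((\ln Z+3\ln L)/L\bigr)$, which is nonpositive unless $\ln Z>L-3\ln L=:a_L$ and is otherwise at most $\ln(\ln Z)-\ln L+3\ln L/a_L$. The layer-cake identity $E[(\ln\ln Z-\ln L)^{+}]=\int_{L}^{\infty}s^{-1}P(\ln Z>s)\,ds\le(\mu-\varepsilon)/L$, together with $P(\ln Z>a_L)\le(\mu-\varepsilon)/a_L$ and $a_L\sim L$, bounds this part of $D(x)$ by $(\mu-\varepsilon)/L+o(L^{-1})$; the residual cases ($Z<c_0$, or $\beta$ so small that $x\beta+Z+c_0$ is at most a fixed power of $L$) contribute a nonpositive amount and are handled directly. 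Adding the two bounds,
\[
D(x)\ \le\ \frac{-\mu+o(1)}{L}+\frac{\mu-\varepsilon+o(1)}{L}\ =\ \frac{-\varepsilon+o(1)}{\ln(x+c_0)}\ <\ 0
\]
for all $x$ past some $x_0$, and taking $F=\{1,\dots,x_0\}$ in Foster's criterion yields the recurrence of $(X_n)$.

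The step I expect to be the real obstacle is the precision demanded in this drift estimate: the quantity that decides the sign of $D(x)$ is itself of order $1/\ln x$, so every approximation must be controlled to $o(1/\ln x)$ rather than merely $O(1/\ln x)$. This is why the threshold is placed at $x\beta/L^{2}$ (so multiplicative distortions cost only $O(L^{-2})$), why the negative drift is extracted by dominated convergence against integrable dominants coming from $\mu=E[-\ln\beta]<\infty$ rather than from any uniform bound, and why the heavy tail of $Z$ must be treated through the exact layer-cake identity fed by the sharp hypothesis $P(\ln Z>t)\le(\mu-\varepsilon)/t$---a cruder tail bound would lose the constant. Hypothesis~1, i.e.\ the i.i.d.\ vector structure, enters only to ensure that $(\beta_{n+1},Z_{n+1})$ has the stated joint law and is independent of the survival coin-flips composing $B_n$; only the marginals of $\beta$ and of $Z$ appear in the computation.
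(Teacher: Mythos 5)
Your proposal is correct, but it takes a genuinely different route from the paper. The paper works with the Green-function criterion $\sum_n P(X_n=1\mid X_0=1)=+\infty$: it first establishes a representation of $X_n$ as a sum of independent binomials (Lemma 1), computes $P(X_n=1)=p_1E[\exp(\sum_i Z_i\ln(1-\prod_{j=i+1}^n\beta_j))]$ exactly, reverses time by exchangeability, restricts to a good event for the running averages of $-\ln\beta_j$, factorizes the resulting expectation using Hypothesis 1 together with the Harris--FKG inequality, and finally shows by Laplace-transform estimates (Lemmas 2 and 3) that the return probabilities decay no faster than $n^{-\alpha}$ with $\alpha<1$. You instead run a Lamperti-type Foster--Lyapunov argument with $V(x)=\ln\ln(x+c_0)$, reducing the whole theorem to a one-step drift estimate at precision $o(1/\ln x)$. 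Both the criterion you invoke (nonpositive drift outside a finite set for a nonnegative norm-like $V$ implies recurrence of an irreducible countable chain) and the drift computation itself are sound: the conditional Jensen step, the split according to whether $Z$ is smaller or larger than $x\beta/L^2$, the dominated-convergence extraction of $(-\mu+o(1))/L$ from the surviving-population term (using the integrable dominant $|\ln\beta|+\ln(1+c_0)$), and the layer-cake bound $E[(\ln\ln Z-\ln L)^+]=\int_L^\infty s^{-1}P(\ln Z>s)\,ds\le(\mu-\varepsilon)/L$ for the immigration term all check out, and the residual cases are indeed harmless. Two points of comparison are worth recording. First, your proof nowhere uses Hypothesis 1: only the marginal laws of $\beta$ and $Z$ enter the final bounds (the indicator coupling them is disposed of by dominated convergence), so you in fact prove the theorem without the independence assumption the paper needs in order to factor its expectation. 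Second, what the paper's heavier computation buys is quantitative information --- an explicit polynomial lower bound on $P(X_n=1)$ --- which the Lyapunov argument does not provide.
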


\medskip

{\bf Hypothesis 2.} Assume that $E(\beta^{-\theta})<+\infty$ for some $\theta>0$.

\begin{thm}
Assume that Hypothesis 2 holds.
If \begin{equation}
    \liminf_{t\to\infty} t P(\ln Z>t)>\mu
\end{equation}
then the Markov chain $(X_n)$ is transient.
\end{thm}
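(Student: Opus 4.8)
The plan is to exhibit a Foster--Lyapunov function certifying transience. Throughout, use that $(X_n)$ is a Markov chain whose one-step transition from a state $x$ is: draw $(\beta,Z)$ from the common law of the vectors $(\beta_i,Z_i)$, let $B$ be conditionally $\mathrm{Binomial}(x,\beta)$, and put $X_1=B+Z$. I will invoke the standard transience criterion for irreducible chains on $\N$: it suffices to find a bounded, strictly decreasing $g:\N\to[0,\infty)$ and a finite set $C\subsetneq\N$ with $E\big[g(X_1)\mid X_0=x\big]\le g(x)$ for every $x\notin C$. Indeed $-g\big(X_{n\wedge\tau_C}\big)$ is then a bounded submartingale, and since $-g(x_0)>\max_{C}(-g)$ for $x_0:=\max C+1$ (strict monotonicity), this forces $P_{x_0}(\tau_C<\infty)<1$, which by irreducibility makes the chain transient. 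Put $\gamma:=\liminf_{t\to\infty}tP(\ln Z>t)$, which exceeds $\mu$ by hypothesis, fix $\gamma'\in(\mu,\gamma)$ and $\alpha\in(1,\gamma'/\mu)$ (this interval is nonempty precisely because $\gamma>\mu$), and take $g(x)=\psi(\ln x)$ with $\psi(u)=\tfrac{1}{\alpha-1}(1+u^{+})^{-(\alpha-1)}$; then $\psi$ is bounded, strictly decreasing on $[0,\infty)$, and $\phi:=-\psi'$ equals $(1+u)^{-\alpha}$ there.

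The heart of the argument is the drift bound $E_x[g(X_1)]\le g(x)$ for all large $x$. Let $\delta_x\to0$ slowly (e.g.\ $\delta_x=(\ln x)^{-1/2}$) and set $G_x=\{B\ge(1-\delta_x)\beta x\}\cap\{\beta\ge x^{-1/2}\}$. Chernoff's bound for the binomial lower tail, Markov's inequality, and Hypothesis~2 ($E[\beta^{-\theta}]<\infty$) give $P(G_x^{c})\le Cx^{-\theta/2}$. On $G_x$ one has $X_1\ge\max\big((1-\delta_x)\beta x,\ Z\big)$, hence $g(X_1)\le\min\!\big(\psi(\ln((1-\delta_x)\beta x)),\,\psi(\ln Z)\big)$ by monotonicity, while $g(X_1)\le\psi(0)$ always; therefore $E_x[g(X_1)]\le E\big[\min(\psi(\ln((1-\delta_x)\beta x)),\psi(\ln Z))\big]+C\psi(0)x^{-\theta/2}$. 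Writing $\min(a,b)=a-(a-b)^{+}$ and, inside the positive part, bounding $\psi(\ln((1-\delta_x)\beta x))\ge\psi(\ln((1-\delta_x)x))$ pointwise, the expectation of the minimum is at most $E\big[\psi(\ln((1-\delta_x)\beta x))\big]-\mathrm{gain}$, where $\mathrm{gain}=E\big[(\psi(\ln((1-\delta_x)x))-\psi(\ln Z))^{+}\big]$. For the first term, with $W:=-\ln\beta$ we have $\psi(\ln((1-\delta_x)\beta x))-\psi(\ln x)=\int_{\ln x-W+\ln(1-\delta_x)}^{\ln x}\phi(s)\,ds$; since $\phi$ varies slowly this integral is at most $(W+o(1))\phi(\ln x)$ on $\{W\le(\ln x)^{1/2}\}$, and Hypothesis~2 gives $W$ an exponential moment so that the contribution of $\{W>(\ln x)^{1/2}\}$ is $o(\phi(\ln x))$; taking expectations with $E[W]=\mu$ yields $E[\psi(\ln((1-\delta_x)\beta x))]\le g(x)+(\mu+o(1))\phi(\ln x)$. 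For the gain term, Tonelli gives $\mathrm{gain}=\int_{\ln((1-\delta_x)x)}^{\infty}\phi(s)P(\ln Z>s)\,ds$, which for $x$ large (so that $P(\ln Z>s)\ge\gamma'/s$ over the whole range) is at least $\gamma'\int_{\ln((1-\delta_x)x)}^{\infty}(1+s)^{-\alpha-1}\,ds=\tfrac{\gamma'}{\alpha}\phi(\ln((1-\delta_x)x))\ge\tfrac{\gamma'}{\alpha}\phi(\ln x)$. Combining, $E_x[g(X_1)]\le g(x)+\big(\mu-\tfrac{\gamma'}{\alpha}+o(1)\big)\phi(\ln x)+O(x^{-\theta/2})$; since $\alpha<\gamma'/\mu$ gives $\mu-\gamma'/\alpha<0$ and $x^{-\theta/2}=o(\phi(\ln x))$, this is $\le g(x)$ once $x>x_0$ for some $x_0<\infty$.

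Taking $C=\{1,\dots,x_0\}$ and noting $g(x_0+1)<g(x_0)=\min_C g$, the transience criterion applies. I expect the drift bound to be the only genuine difficulty: the two useful contributions --- the downward drift from the binomial thinning and the downward drift created by the heavy-tailed influx --- are both only of order $\phi(\ln x)\asymp(\ln x)^{-\alpha}$, so every error (binomial fluctuation, an atypically small $\beta$, the crude bound off $G_x$, the slowly-varying corrections) has to be shown to be $o((\ln x)^{-\alpha})$; this is exactly where Hypothesis~2 is used, to make those errors polynomially rather than merely polylogarithmically small in $x$. One also has to remember that $P(\ln Z>t)\ge\gamma'/t$ holds only for large $t$, which is precisely why the drift comparison is available only outside a finite set.
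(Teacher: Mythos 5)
Your argument is correct, but it takes a genuinely different route from the paper. The paper works through the Green's function: using the representation $X_n\overset{d}{=}\sum_{i=1}^nB_{i,n}$ of Lemma \ref{lem:rep} and exchangeability of the environment, it writes $P(X_n=1)=p_1I_{n-1}$ with $I_n=E[\exp(\sum_iZ_{i+1}\ln(1-\prod_{j\le i}\beta_j))]$, invokes Hypothesis 2 to get an exponential large-deviation bound on the running average of $-\ln\beta_j$ so that $\prod_{j\le i}\beta_j$ can be replaced by $e^{-i(\mu+\ee)}$ for $i\ge K\ln n$, and then estimates the resulting product of Laplace transforms of $Z$ (Lemma \ref{Laplace} plus an integral comparison) to get $P(X_n=1)\lesssim(n/\ln n)^{-k_4}$ with $k_4>1$; summability gives transience. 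You instead verify a one-step Foster--Lyapunov drift condition for $g(x)=\tfrac1{\alpha-1}(1+\ln x)^{-(\alpha-1)}$ with $1<\alpha<\gamma'/\mu$, balancing the upward push of $g$ caused by the catastrophe (of size $(\mu+o(1))\phi(\ln x)$) against the downward push caused by the heavy tail of $Z$ (of size at least $(\gamma'/\alpha)\phi(\ln x)$); Hypothesis 2 enters in the form $P(\beta<x^{-1/2})=O(x^{-\theta/2})$ and $P(-\ln\beta>\sqrt{\ln x})=O(e^{-\theta\sqrt{\ln x}})$ rather than as a large-deviation estimate for the running average. Your key steps check out: the decomposition $\min(a,b)=a-(a-b)^{+}$, the Tonelli identity giving $\mathrm{gain}=\int\phi(s)P(\ln Z>s)\,ds$, the bound $E[\psi(\ln((1-\delta_x)\beta x))]\le g(x)+(\mu+o(1))\phi(\ln x)$, and the final comparison $\mu-\gamma'/\alpha<0$, which is exactly where $\liminf_t tP(\ln Z>t)>\mu$ is used. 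What each approach buys: the paper's argument yields a quantitative decay rate for the return probabilities and reuses the representation and Laplace-transform machinery already built for Theorem 1; yours is local, needing only the one-step kernel and the marginal laws of $\beta$ and $Z$ (no representation lemma, no time reversal, and no independence of $\beta$ and $Z$ within a step), and it makes visible the Lamperti-type structure of the problem --- $\ln X_n$ is roughly a walk with drift $-\mu$ refreshed by an innovation with tail $\sim\gamma/t$ --- which explains why the critical comparison is between $\mu$ and $\lim_t tP(\ln Z>t)$.
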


We now give a necessary and sufficient condition for positive recurrence.

\begin{thm}
The Markov chain $(X_n)$ is positive recurrent if and only if 
$$E(\ln Z)<+\infty.$$
\end{thm}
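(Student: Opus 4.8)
The plan is to construct the stationary distribution explicitly by a cohort decomposition and then to read off both implications from whether that distribution is a proper probability measure. Start the chain at $X_0=1$ (as defined) and classify each individual alive at time $n$ by the cohort in which it entered: the initial individual is present at time $n$ with probability $\prod_{j=1}^n\beta_j$, and each of the $Z_k$ individuals entering at time $k$ ($1\le k\le n$) is present at time $n$ with probability $\prod_{j=k+1}^n\beta_j$, all thinning independently. Conditionally on the environment this gives $X_n=A_n+\sum_{k=1}^n C_k^{(n)}$ with independent $A_n\sim\mathrm{Bin}(1,\prod_{j=1}^n\beta_j)$ and $C_k^{(n)}\sim\mathrm{Bin}(Z_k,\prod_{j=k+1}^n\beta_j)$. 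Since $(\beta_j,Z_j)_{j=1}^n$ is i.i.d., hence exchangeable, reversing the index order yields $\sum_{k=1}^n C_k^{(n)}\stackrel{d}{=}\sum_{m=1}^n D_m$, where $(\gamma_i,W_i)_{i\ge 1}$ are i.i.d.\ copies of $(\beta,Z)$ and, conditionally on this environment, $D_m\sim\mathrm{Bin}(W_m,\prod_{i=1}^{m-1}\gamma_i)$ independently. The gain from the reversal is monotonicity: $\sum_{m=1}^n D_m$ is nondecreasing in $n$, so $\mathcal S:=\sum_{m\ge 1}D_m\in[0,\infty]$ is well defined, and since $A_n\ge 0$ we get $P(X_n\le M)\le P\bigl(\sum_{m=1}^n D_m\le M\bigr)\downarrow P(\mathcal S\le M)$ for every $M$.

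The crux is the dichotomy $P(\mathcal S<\infty)=1$ when $E(\ln Z)<\infty$, and $P(\mathcal S=\infty)=1$ when $E(\ln Z)=\infty$. Writing $\ln\bigl(W_m\prod_{i=1}^{m-1}\gamma_i\bigr)=\ln W_m+\sum_{i=1}^{m-1}\ln\gamma_i$, the SLLN (using $\mu=E(-\ln\beta)\in(0,\infty)$) gives $m^{-1}\sum_{i=1}^{m-1}\ln\gamma_i\to-\mu$ a.s., while the i.i.d.\ integers $W_m\ge 1$ satisfy, by Borel--Cantelli, $m^{-1}\ln W_m\to 0$ a.s.\ when $E(\ln Z)<\infty$ and $\limsup_m m^{-1}\ln W_m=\infty$ a.s.\ when $E(\ln Z)=\infty$. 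In the first case $W_m\prod_{i=1}^{m-1}\gamma_i\to 0$ exponentially fast, so $E[\mathcal S\mid\mathrm{env}]=\sum_m W_m\prod_{i=1}^{m-1}\gamma_i<\infty$ a.s.\ and $\mathcal S<\infty$ a.s. In the second case, along a random subsequence $\ln W_m>2\mu m$, so $W_m\prod_{i=1}^{m-1}\gamma_i\to\infty$ and $P(D_m\ge 1\mid\mathrm{env})=1-(1-\prod_{i=1}^{m-1}\gamma_i)^{W_m}\to 1$ along it; conditional independence of the $D_m$ together with the converse Borel--Cantelli lemma (applied conditionally on the environment) then force $D_m\ge 1$ infinitely often, i.e.\ $\mathcal S=\infty$ a.s.

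Suppose $E(\ln Z)<\infty$. Then $\mathcal S<\infty$ a.s.\ and, since $\mathcal S\ge D_1=W_1\ge 1$, its law $\nu$ is a probability measure on $\N$. I would verify that $\nu$ is stationary by a shift argument: peel off, in every $D_m$ with $m\ge 2$, the thinning corresponding to the factor $\gamma_1$, and use that independent thinning commutes with summation; this gives $\mathcal S\stackrel{d}{=}W_1+\mathrm{Bin}(\mathcal S',\gamma_1)$ with $\mathcal S'\stackrel{d}{=}\mathcal S$ independent of $(\gamma_1,W_1)$, which is exactly the one-step evolution $X_{n+1}=\mathrm{Bin}(X_n,\beta_{n+1})+Z_{n+1}$ (with $(\beta_{n+1},Z_{n+1})$ independent of $X_n$) applied to a state distributed as $\mathcal S$. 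An irreducible Markov chain carrying a stationary probability measure is positive recurrent, which settles the ``if'' direction. (A Foster--Lyapunov argument with $V(x)=\ln(1+x)$, whose drift is $-\mu+o(1)$ once $E(\ln Z)<\infty$ is used to control the rare event that $\beta_{n+1}X_n$ is small, is a possible alternative but requires a fussier case analysis for small $\beta$.)

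Suppose instead $E(\ln Z)=\infty$, so $\mathcal S=\infty$ a.s.\ and hence $P(X_n\le M)\to 0$ for every $M$. If $(X_n)$ were positive recurrent with stationary law $\pi$, the Ces\`aro averages $N^{-1}\sum_{n\le N}P(X_n\le M)$ would converge both to $\pi(\{1,\dots,M\})$ and to $0$, forcing $\pi(\{1,\dots,M\})=0$ for all $M$ and contradicting $\pi(\N)=1$; hence the chain is not positive recurrent, which settles the ``only if'' direction. The main obstacle I anticipate is the bookkeeping in the cohort decomposition and time reversal --- keeping each cohort's size matched to the correct (possibly $Z$-dependent) environment coordinate while its survival probability involves only the subsequent ones --- together with making the thinning identity for $\mathcal S$ fully rigorous.
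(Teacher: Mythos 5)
Your proposal is correct, and it overlaps with the paper's proof in its probabilistic core while packaging the ``if'' direction differently. The paper works with the generating function: iterating $E(s^{X_n}\mid{\cal F}_n)$ gives $E(s^{X_n})=E[\exp(\sum_{i=1}^n Z_i\ln(1-\beta_{1,i}(1-s)))]$, and everything reduces to whether $\sum_i Z_i\beta_{1,i}$ converges or diverges a.s.; that dichotomy is exactly the paper's Lemma 4, proved by the same two Borel--Cantelli applications you use ($\sum_i P(\ln Z_i>ci)<\infty$ iff $E(\ln Z)<\infty$). Your ``only if'' direction is then essentially identical in substance (generating function $\to 0$ versus $P(X_n\le M)\to 0$ plus the Ces\`aro argument). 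Where you genuinely diverge is the ``if'' direction: the paper deduces positive recurrence from the generating function staying bounded away from $0$ (tightness for an irreducible chain), whereas you exploit the monotonicity of the reversed representation --- the same reversal-by-exchangeability that the paper uses in Lemma 1 and in the proofs of Theorems 1 and 2, but not in its proof of Theorem 3 --- to build the a.s. limit $\mathcal S$ and identify its law as an explicit stationary distribution via the thinning/shift identity $\mathcal S\overset{d}{=}W_1+\mathrm{Bin}(\mathcal S',\gamma_1)$. That buys you a concrete invariant measure (and a cleaner logical endgame: irreducible plus stationary probability implies positive recurrence), at the cost of the extra bookkeeping you flag in making the thinning identity rigorous. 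One small correction: the paper sets $B_0=0$, so the initial individual is removed deterministically at time $1$ and $X_n$ equals $\sum_k C_k^{(n)}$ in distribution with no $A_n$ term; this is harmless for your argument (you only ever drop $A_n\ge 0$, and the stationarity of the law of $\mathcal S$ is verified directly), but the decomposition as you state it describes a slightly different chain.
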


Even when the chain is positive recurrent the paths of the chain can have huge fluctuations. See the simulation in Figure 1.

\section{An example}

For this example we will assume that $\beta$ has a point mass distribution. This constant is also denoted by $\beta$. Let the distribution of $Z$ be given by
$$P(Z=k)=\frac{C}{k(\ln k)^{a+1}},$$
for all $k\geq 2$. The parameter $a$ is strictly positive and $C>0$ only depends on $a$. The Markov chain $(X_n)$ exhibits three different behaviors.

\begin{itemize}
    \item If $a>1$ the chain $(X_n)$ is positive recurrent for all $\beta$ in $(0,1)$.
    \item If $a<1$ the chain $(X_n)$ is transient for all $\beta$ in $(0,1)$.
    \item If $a=1$ then there exists a critical value $\beta_c\in (0,1)$ such that
    if $\beta<\beta_c$ then $(X_n)$ is transient while if $\beta>\beta_c$ the chain is null recurrent.
\end{itemize}

Note that the case $a=1$ gives a (rare?) example of a phase transition from transience to null recurrence without ever reaching positive recurrence. This example requires $E(\ln Z)=+\infty$ which seems like an extreme hypothesis for a population model. However, the main point of this example is conceptual. Based on classical examples one may wrongly believe that null recurrence is a fleeting phenomenon. For instance, the simple random walk exhibits null recurrence for a single point in the parameter space. For our model, however, null recurrence can occur for a whole interval in the parameter space.

We now justify our claims. An easy comparison with an integral shows that
$$\lim_{t\to\infty}\frac{t P(\ln Z>t)}{C/(at^{a-1})}=1.$$
Using this limit with Theorems 1 and 2 yields the behavior of the chain for $a<1$ and $a>1$. 

Turning to $a=1$ we see that $E(\ln Z)=\infty$. In this case the chain is positive recurrent for no $\beta$ in $(0,1)$. The limit above becomes  
$$\lim_{t\to\infty}t P(\ln Z>t)=C.$$
Let $\beta_c=\exp(-C)$. Since $\beta$ is deterministic $\mu=-\ln \beta$.
By Theorem 1, 
if $\beta<\beta_c$ the chain is recurrent and therefore null recurrent. On the other hand if $\beta>\beta_c$ by Theorem 2 the chain is transient. 

\section{Literature}

Catastrophe models of the type we study in this paper go back to at least the 1980's, see for instance \cite{Brockwell} and \cite{Brockwell et al.}. These models are also related to branching processes with immigration, see for instance \cite{Heyde}, \cite{Pakes} and \cite{Seneta}.

Neuts in \cite{Neuts} introduced several models for catastrophes including the following Markov chain $(Y_n)$ which is closely related to our model. Let $p$ and $\beta$ be parameters in $(0,1)$. Let $(Z_n)$ be a sequence of i.i.d. random variables with support on the positive integers.
Given $Y_0\geq 0$, for $n\geq 1$,
\begin{itemize}
    \item With probability $p$, $Y_n=B_n$, where given $Y_{n-1}$, $B_n$ is a binomial random variable with parameters  $Y_{n-1}$ and $\beta$.
    \item With probability $1-p$, $Y_n=Y_{n-1}+Z_n$.
\end{itemize}
A necessary and sufficient condition for positive recurrence is given in \cite{Neuts}. One can show that this condition is equivalent to our Theorem 3 in the particular case when $\beta$ is a fixed constant. In fact, in the last section of this paper we will provide a coupling showing that Theorems 1, 2 and 3 hold true for Neut's model.

Recently, the chain $(Y_n)$ has been studied in \cite{Ben-Ari} and \cite{GH}. These articles concentrate mainly on properties of the stationary probability distribution in the positive recurrent case. We believe the present paper to be the first to address null recurrence and transience for binomial catastrophes. 

\section{Proofs}

We first give a construction of the process.
Let $\{\xi_{i,n}: n\geq 0,i\geq 1\}$ be a collection of independent Bernoulli random variables such that for all $n\geq 0$ and $i\geq 1$, $P(\xi_{i,n}=1)=\beta_n$. Let $B_0=0$ and given $X_1,\dots,X_n$, let
$$B_n=\sum_{i=1}^{X_n}\xi_{i,n+1}.$$
Then,
\begin{equation*}
  X_{n+1}=B_n+Z_{n+1}  
\end{equation*}

Next we give a useful representation formula for the distribution of $(X_n)$.

\begin{lem}
\label{lem:rep}
 For $n\geq 1$, 
 $$X_n\overset{d}{=}\sum_{i=1}^n B_{i,n},$$
 where given ${\cal F}_n$, $B_{i,n}$ is a binomial random variable with parameters $Z_i$ and $\prod_{j=i+1}^n \beta_j$ for $1\leq i\leq n$. Moreover, for fixed $n\geq 1$, $B_{1,n},\dots,B_{n,n}$ are independent.
\end{lem}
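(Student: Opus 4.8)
The plan is to track the "age" of individuals in the population. Fix $n\ge 1$. An individual counted in $X_n$ was introduced at some time $i\in\{1,\dots,n\}$ as one of the $Z_i$ newcomers, and it is still alive at time $n$ precisely when it survived each of the catastrophes at times $i+1,i+2,\dots,n$. Since, in the construction given just before the lemma, the survival of each individual at each step is governed by an independent Bernoulli variable $\xi_{\bullet,m}$ with success probability $\beta_m$ (conditionally on the environment), the number $B_{i,n}$ of the time-$i$ cohort that is still alive at time $n$ is, conditionally on $\mathcal F_n$, a sum of $Z_i$ independent Bernoulli variables each of which equals $1$ with probability $\prod_{j=i+1}^n\beta_j$; hence $B_{i,n}\mid\mathcal F_n\sim\mathrm{Bin}\bigl(Z_i,\prod_{j=i+1}^n\beta_j\bigr)$, with the empty product equal to $1$ when $i=n$. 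Summing over cohorts gives the claimed identity in distribution $X_n\overset{d}{=}\sum_{i=1}^n B_{i,n}$, and the independence of $B_{1,n},\dots,B_{n,n}$ given $\mathcal F_n$ follows because distinct cohorts use disjoint blocks of the $\xi$'s (and the $Z_i$'s are part of $\mathcal F_n$).

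I would carry this out by induction on $n$. The base case $n=1$ reads $X_1=B_0+Z_1=Z_1$, and the only cohort is $B_{1,1}=Z_1\sim\mathrm{Bin}(Z_1,1)$, so the statement holds trivially. For the inductive step, assume $X_n\overset{d}{=}\sum_{i=1}^n B_{i,n}$ with the stated conditional law. Apply one more catastrophe: each of the $X_n$ individuals survives to time $n+1$ independently with probability $\beta_{n+1}$, so the survivors of cohort $i$ form a $\mathrm{Bin}\bigl(B_{i,n},\beta_{n+1}\bigr)$ variable conditionally on $\mathcal F_n$ and $B_{i,n}$; by the standard thinning identity for binomials, this is $\mathrm{Bin}\bigl(Z_i,\beta_{n+1}\prod_{j=i+1}^n\beta_j\bigr)=\mathrm{Bin}\bigl(Z_i,\prod_{j=i+1}^{n+1}\beta_j\bigr)$ conditionally on $\mathcal F_{n+1}$, which is exactly $B_{i,n+1}$. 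Then $X_{n+1}=B_n+Z_{n+1}$, where $B_n=\sum_{i=1}^n(\text{survivors of cohort }i)$ and $Z_{n+1}$ constitutes the new cohort $B_{n+1,n+1}\sim\mathrm{Bin}(Z_{n+1},1)$, giving $X_{n+1}\overset{d}{=}\sum_{i=1}^{n+1}B_{i,n+1}$. Independence is preserved because the thinning at step $n+1$ uses the fresh independent family $\{\xi_{\bullet,n+1}\}$, partitioned across cohorts, and $Z_{n+1}$ is independent of everything earlier given the environment.

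The one point requiring a little care — and the step I expect to be the main obstacle, mild as it is — is the bookkeeping that makes the conditional independence of the cohorts clean: one must set up the induction so that the $\xi$'s used to thin cohort $i$ between times $n$ and $n+1$ are indexed disjointly from those used for the other cohorts. This is most transparent if, instead of reusing the global indexing $\xi_{i,n}$, one re-labels within each cohort (e.g.\ assign to cohort $i$ its own independent i.i.d.\ survival sequences), since then "$B_{i,n}$ survivors of cohort $i$, each re-thinned independently" is literally a sub-collection of independent Bernoullis and the thinning identity applies cohort-by-cohort. Once the indexing is fixed, the distributional identity, the conditional binomial parameters, and the conditional independence all drop out of the induction; the equality is only in distribution (not almost surely) because we are free to re-realize the survival variables at each step, which is why the lemma is stated with $\overset{d}{=}$.
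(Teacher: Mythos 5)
Your proposal is correct and follows essentially the same route as the paper: induction on $n$, decomposing the population into cohorts by arrival time, thinning each cohort by $\beta_{n+1}$ via the standard binomial-thinning identity, and securing conditional independence by assigning disjoint blocks of the Bernoulli variables to the cohorts (the paper does this explicitly with the partial sums $S_{\ell,k}$ and the block variables $\tilde B_{\ell,k}$). The bookkeeping point you flag is precisely the one the paper's proof handles, so there is no gap.
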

By convention we set $\prod_{j=n+1}^n \beta_j=1$. Therefore, $B_{n,n}=Z_n$.

\begin{proof}

We prove the Lemma by induction on $n$. Recall that $X_1=B_0+Z_1$, since $B_0=0$, $X_1=Z_1$ which has the same distribution as $B_{1,1}$. Hence the Lemma holds for $n=1$. Assume now that the Lemma holds for $n=k$. By construction,
\begin{align*}
X_{k+1}=&B_k+Z_{k+1}\\
=&\sum_{i=1}^{X_k}\xi_{i,k+1}+Z_{k+1},
\end{align*}
with $X_k$ independent of $(\xi_{i,k+1})_{i\geq 1}$. Thus,
 $B_{1,k},\dots,B_{k,k}$ are independent and independent of $Z_{k+1},\xi_{1,k+1},\xi_{2,k+1},\dots$
Let $S_{0,k}=0$ and $S_{j,k}=\sum_{i=1}^j B_{i,k}$ for $1\leq j\leq k$. Note that 
$$X_{k+1}\overset{d}{=}\sum_{i=1}^{S_{k,k}}\xi_{i,k+1}+Z_{k+1}.$$
Define,
$$\tilde B_{\ell,k}=\sum_{i=S_{\ell-1,k}+1}^{S_{\ell,k}} \xi_{i,k}.$$
Hence,
$$\sum_{i=1}^{S_{k,k}}\xi_{i,k}=\sum_{\ell=1}^k \tilde B_{\ell,k},$$
where $\tilde B_{1,k},\dots, \tilde B_{k,k}$ are independent. Moreover,
$$\tilde B_{\ell,k}\overset{d}{=}\sum_{i=1}^{B_{\ell,k}}\xi_{i,k+1}.$$

Therefore,
\begin{align*}
  X_{k+1} \overset{d}{=}& \sum_{i=1}^{S_{k,k}}\xi_{i,k}+Z_{k+1}\\
  \overset{d}{=}& \sum_{\ell=1}^k \tilde B_{\ell,k}+Z_{k+1}.\\
\end{align*}
To conclude the proof of the Lemma we show that $\tilde B_{\ell,k}\overset{d}{=}B_{\ell,k+1}.$ This comes from the following elementary fact. Assume that $X$ is a binomial random variable with parameters $N$ and $p$ and that $X$ is independent of the i.i.d. sequence $(\xi_i)_{i\geq 1}$ of Bernoulli random variables with parameter $f$. Then,
$$\sum_{i=1}^X \xi_i$$
is a binomial random variable with parameters $N$ and $pf$.

\end{proof}

We will prove Theorems 1 and 2 by using the following classical recurrence criterion for Markov chains. State $1$ is recurrent for the  Markov chain $(X_n)$ if and only if
$$\sum_{n\geq 1}P(X_n=1|X_0=1)=+\infty.$$
Let $p_1=P(Z=1)$ and assume for simplicity that $p_1>0$. Using Lemma \ref{lem:rep},
\begin{align*}
    P(X_n=1|{\cal F}_n)=&\prod_{i=1}^n P(B_{i,n}=0)\\
    =&p_1\prod_{i=1}^{n-1}\left(1-\prod_{j=i+1}^n\beta_j\right)^{Z_i}\\
    =&p_1\exp\left(\sum_{i=1}^{n-1}Z_i\ln\left(1-\prod_{j=i+1}^n\beta_j\right)\right)\\
\end{align*}
 Using now that the sequence $((Z_n,\beta_n))_{n\geq 0}$ is exchangeable,
\begin{align*}
\exp\left(\sum_{i=1}^{n-1}Z_i\ln\left(1-\prod_{j=i+1}^n\beta_j\right)\right)&\overset{d}{=}
\exp\left(\sum_{i=1}^{n-1}Z_{n-i}\ln\left(1-\prod_{j=i+1}^n\beta_{n-j}\right)\right)
\end{align*}
We do the changes of indices, $n-i=\ell$ and $n-j=m$ to get,
\begin{align*}
\exp\left(\sum_{i=1}^{n-1}Z_i\ln\left(1-\prod_{j=i+1}^n\beta_j\right)\right)&\overset{d}{=}
\exp\left(\sum_{\ell=1}^{n-1}Z_\ell\ln\left(1-\prod_{m=0}^{\ell-1}\beta_j\right)\right)\\
&\overset{d}{=}\exp\left(\sum_{\ell=1}^{n-1}Z_{\ell+1}\ln\left(1-\prod_{m=1}^\ell\beta_j\right)\right).
\end{align*}

For $n\geq 1$, let
$$I_n=E\left [\exp\left(\sum_{i=1}^{n}Z_{i+1}\ln\left(1-\prod_{j=1}^i\beta_j\right)\right)\right].$$
Then,
\begin{equation}
\label{eq:I}
    P(X_n=1)=p_1I_{n-1},
\end{equation}

\subsection{Proof of Theorem 1}

\begin{proof}

For $j\geq 1$, let $W_j=-\ln\beta_j$ and 
$$\overline W_i=\frac{1}{i}\sum_{i=1}^j W_i.$$
Recall that $E(W_1)=\mu<+\infty$. Hence, for every $\ee$ in $(0,\mu)$, there exists $\alpha>0$ such that for all $i\geq 1$,
\begin{equation}
    P(\overline W_i<\mu-\ee)\leq e^{-\alpha i}
\end{equation}
Define the event
$$A_N=\{\overline W_i\geq \mu-\ee\,\forall i>N\}.$$
Choose a natural number $N$ such that $P(A_N)>0$. Let
$$S_n=\sum_{i=1}^{N}Z_{i+1}\ln\left(1-\prod_{j=1}^i\beta_j\right).$$

From (\ref{eq:I}) we have for $n>N$,
\begin{align*}
    I_n\geq&E\left [\exp\left(S_N\right)
    \exp\left(\sum_{i=N+1}^{n}Z_{i+1}\ln\left(1-e^{-i(\mu-\ee)}\right)\right);A_N\right ].
\end{align*}
By Hypothesis 1, $\sum_{i=N+1}^{n}Z_{i+1}\ln\left(1-e^{-i(\mu-\ee)}\right)$ is independent of $A_N$ and of $\sum_{i=1}^{N}Z_{i+1}\ln\left(1-\prod_{j=1}^i\beta_j\right)$. Hence,
\begin{align*}
&E\left [\exp\left(S_N\right)
    \exp\left(\sum_{i=N+1}^{n}Z_{i+1}\ln\left(1-e^{-i(\mu-\ee)}\right)\right);A_N\right ]=\\
 &E\left [\exp\left(S_N\right);A_N\right]
   E\left[ \exp\left(\sum_{i=N+1}^{n}Z_{i+1}\ln\left(1-e^{-i(\mu-\ee)}\right)\right)\right ].
\end{align*}
By the Harris-FKG inequality (see for instance (2.4) in \cite{Grimmett}),
\begin{align*}
E\left [\exp\left(S_N\right);A_N\right]\geq E\left [\exp\left(S_N\right)\right]P(A_N)
\end{align*}
For fixed $N$ the r.h.s. is just a constant $K(N)>0$. Hence,
$$I_n\geq K(N)E \left [\exp\left(\sum_{i=N+1}^{n}Z_{i+1}\ln\left(1-e^{-i(\mu-\ee)}\right)\right)\right ].$$

We now turn to,
\begin{align*}
   &E \left [\exp\left(\sum_{i=N+1}^{n}Z_{i+1}\ln\left(1-e^{-i(\mu-\ee)}\right)\right)\right ]
   \geq \\
   &E \left [\exp\left(-(1+\ee)\sum_{i=N+1}^{n}Z_{i+1}e^{-i(\mu-\ee)}\right)\right ]\geq \\
   &E \left [\exp\left(-(1+\ee)\sum_{i=1}^{n}Z_{i+1}e^{-i(\mu-\ee)}\right)\right ]
   \equiv L_n\\
\end{align*}
where we use that $\ln(1-x)\geq -(1+\ee)x$ for $x$ small enough. We take $N$ large enough so that
this inequality holds for $x=e^{-i(\mu-\ee)}$ for all $i>N$.

Hence, to show recurrence it is enough to show that the series with general term $L_n$ is infinite.
In  fact this will be a consequence of
$$\sum_{n\geq 1}\tilde L_n=+\infty,$$
where
\begin{align*}
   \tilde L_n=E(\exp(-\sum_{i=1}^n c^iZ_i))
   =\prod_{i=1}^n E(\exp(-c^iZ_i)), 
\end{align*}
where $0<c<1$.

To estimate $\tilde L_n$ we start with the following Lemma.

\begin{lem} 
\label{Laplace}
Let $Z$ be a random variable with support on the positive integers. Then, for $\lambda>0$,
$$E(e^{-\lambda Z})=1-(e^{\lambda}-1)\sum_{k\geq 1}e^{-\lambda k}P(Z\geq k).$$
\end{lem}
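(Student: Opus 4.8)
The plan is to prove the identity by a direct Abel summation (summation by parts) on the tail probabilities, which is the standard way to convert an expectation of a nonnegative integer random variable into a sum over its tail. First I would write
\[
E(e^{-\lambda Z}) = \sum_{j\ge 1} e^{-\lambda j} P(Z=j),
\]
using that $Z$ is supported on the positive integers. The goal is to replace the point masses $P(Z=j)$ by tail masses $P(Z\ge k)$.

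Next I would use the telescoping relation $P(Z=j) = P(Z\ge j) - P(Z\ge j+1)$ and substitute it into the sum, then regroup. Writing the sum as $\sum_{j\ge1} e^{-\lambda j}\bigl(P(Z\ge j) - P(Z\ge j+1)\bigr)$ and shifting the index in the second half, the coefficient of $P(Z\ge k)$ becomes $e^{-\lambda k} - e^{-\lambda(k-1)} = -e^{-\lambda k}(e^{\lambda}-1)$ for each $k\ge 2$, while the $k=1$ term contributes $e^{-\lambda}\,P(Z\ge1)= e^{-\lambda}$ since $P(Z\ge 1)=1$. Collecting terms gives
\[
E(e^{-\lambda Z}) = e^{-\lambda} - (e^{\lambda}-1)\sum_{k\ge 2} e^{-\lambda k} P(Z\ge k).
\]
Finally I would reconcile this with the claimed form by noting that the $k=1$ term of the asserted sum is $e^{-\lambda}P(Z\ge1) = e^{-\lambda}$, so $1 - (e^{\lambda}-1)e^{-\lambda} = 1 - 1 + e^{-\lambda} = e^{-\lambda}$, which matches; adding the $k=1$ term into the sum and compensating by the leading $1$ yields exactly the stated formula.

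The only technical point to check is the interchange of summation (Fubini/Tonelli) justifying the regrouping: since $\lambda>0$ all terms $e^{-\lambda k}P(Z\ge k)$ are nonnegative and bounded by $e^{-\lambda k}$, so the double series is absolutely convergent and the rearrangement is legitimate. I do not expect any real obstacle here; the computation is elementary and the main care is bookkeeping of the index shift and the boundary term at $k=1$.
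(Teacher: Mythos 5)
Your proof is correct and is essentially the same summation-by-parts argument as in the paper; the only cosmetic difference is that the paper starts the sum at $k=0$ so that the term $P(Z\geq 0)=1$ produces the leading $1$ directly, whereas you start at $k=1$ and reconcile the boundary term at the end. Both the index bookkeeping and the Tonelli justification are fine.
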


\begin{proof}
\begin{align*}
   E(e^{-\lambda Z})=&\sum_{k\geq 0}e^{-\lambda k}P(Z=k)\\ 
   =&\sum_{k\geq 0}e^{-\lambda k}\left(P(Z\geq k)-P(Z\geq k+1)\right)\\ 
   =&\sum_{k\geq 0}e^{-\lambda k}P(Z\geq k)-\sum_{k\geq 1}e^{-\lambda (k-1)}P(Z\geq k)\\ 
   =&1-(e^{\lambda}-1)\sum_{k\geq 1}e^{-\lambda k}P(Z\geq k).
\end{align*}
\end{proof}

Using Lemma \ref{Laplace} with $\lambda=c^i$,
\begin{align*}
    E(e^{-c^iZ_i})=&1-(e^{c^i}-1)\sum_{k\geq 1}e^{-c^i k}P(Z\geq k)\\
    \end{align*}

We now use that 
$$\limsup_{k\to\infty}kP(\ln Z>k)<\mu.$$
For $\epsilon>0$ small enough there is an integer $k_0$ such that for all $k\geq k_0$,
$$P(\ln Z\geq \ln k)\leq \frac{\mu-\epsilon}{\ln k}.$$
Hence,
\begin{equation}
\label{whole}
 E(e^{-c^iZ})\geq 1- (e^{c^i}-1)\sum_{k=1}^{k_0}e^{-c^i k}P(Z\geq k)
 -(\mu-\epsilon)(e^{c^i}-1)\sum_{k\geq k_0+1}\frac{1}{\ln k}e^{-c^i k}   
\end{equation}

We first bound the finite sum,
\begin{align*}
(e^{c^i}-1)\sum_{k=1}^{k_0}e^{-c^i k}P(Z\geq k)&\leq (e^{c^i}-1)\sum_{k=1}^{k_0}e^{-c^i k}\\
=& (e^{c^i}-1)e^{-c^i}\frac{1-e^{-c^ik_0}}{1-e^{-c^i}}\\
=&1-e^{-c^ik_0}
\end{align*}
Using this bound in equation (\ref{whole}),
\begin{equation}
 E(e^{-c^iZ})\geq 1-(1-e^{-c^ik_0}) -(\mu-\epsilon)(e^{c^i}-1)\sum_{k\geq k_0+1}\frac{1}{\ln k}e^{-c^i k}   
\end{equation}
Observe that for any $\delta>0$ there exists a $\gamma>0$ such that if $|x|<\gamma$ then
$$e^x-1\leq (1+\delta)x.$$
Hence, there exists $i_0$ such that if $i\geq i_0$ then
\begin{equation}
\label{delta}
 E(e^{-c^iZ})\geq 1-(1-e^{-c^ik_0})-(1+\delta)c^i\left((\mu-\epsilon)\sum_{k\geq k_0+1}\frac{1}{\ln k}e^{-c^i k}\right)   
\end{equation}    

The following elementary lemma estimates the tail of the series in equation (\ref{delta}).

\begin{lem}
\label{series}
Let $0<c<1$, $d=1/c$ and $i\geq 1$ then
$$c^i\sum_{k\geq d^i}\frac{1}{\ln k}e^{-c^i k}\leq \frac{k_1}{i\ln d},$$
where $k_1=\frac{e^{-1}}{1-e^{-1}}$.
\end{lem}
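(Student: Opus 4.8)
The plan is to exploit the fact that the lower cutoff $k\geq d^i$ is calibrated so that $c^i k\geq 1$ throughout the range of summation, which simultaneously controls the factor $1/\ln k$ and forces the remaining geometric series to have a bounded ratio. First I would observe that for every integer $k$ with $k\geq d^i$ one has $\ln k\geq \ln(d^i)=i\ln d>0$, hence $\frac{1}{\ln k}\leq\frac{1}{i\ln d}$. Pulling this constant out of the sum reduces the claim to the estimate $c^i\sum_{k\geq d^i}e^{-c^i k}\leq k_1$.

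Next I would bound that geometric tail. Writing $\lambda=c^i$ and summing from $\lceil d^i\rceil$ (the smallest integer $\geq d^i$, since $d^i$ need not be an integer), we get $\sum_{k\geq \lceil d^i\rceil}e^{-\lambda k}=\frac{e^{-\lambda\lceil d^i\rceil}}{1-e^{-\lambda}}\leq\frac{e^{-\lambda d^i}}{1-e^{-\lambda}}$, and since $\lambda d^i=c^i d^i=1$ this equals $\frac{e^{-1}}{1-e^{-\lambda}}$. Multiplying through by $c^i=\lambda$, it remains only to check that $\frac{\lambda e^{-1}}{1-e^{-\lambda}}\leq\frac{e^{-1}}{1-e^{-1}}$, i.e. that $g(\lambda):=\frac{\lambda}{1-e^{-\lambda}}$ satisfies $g(\lambda)\leq g(1)$ for $\lambda\in(0,1]$.

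Finally I would verify this monotonicity: differentiating, the sign of $g'(\lambda)$ is the sign of $1-(1+\lambda)e^{-\lambda}$, which is positive for all $\lambda>0$ because $e^{\lambda}>1+\lambda$; thus $g$ is increasing on $(0,\infty)$ and $g(\lambda)\leq g(1)=\frac{1}{1-e^{-1}}$ whenever $0<\lambda\leq 1$. Chaining the three steps yields $c^i\sum_{k\geq d^i}\frac{1}{\ln k}e^{-c^i k}\leq\frac{1}{i\ln d}\cdot c^i\sum_{k\geq d^i}e^{-c^i k}\leq\frac{1}{i\ln d}\cdot e^{-1}\cdot\frac{1}{1-e^{-1}}=\frac{k_1}{i\ln d}$. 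There is no genuine obstacle here; the only points that need care are the ceiling in the summation index and the fact that $\lambda=c^i\in(0,1]$ because $c<1$ and $i\geq 1$ — and it is precisely this last point that makes the bound $g(\lambda)\leq g(1)$ available, so it is the load-bearing observation of the argument.
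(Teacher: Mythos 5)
Your proof is correct. It shares with the paper the two load-bearing observations --- that $k\geq d^i$ forces $\ln k\geq i\ln d$, and that the normalization $c^i d^i=1$ pins the exponential tail at $e^{-1}$ --- but it executes the tail estimate differently. The paper partitions the range $k\geq d^i$ into consecutive blocks of length $d^i$, bounds each block by its length times its largest term, and is left with $\sum_{\ell\geq 1}e^{-\ell}=\frac{e^{-1}}{1-e^{-1}}$, so the constant $k_1$ emerges directly from a geometric series in $\ell$ with no calculus. You instead pull out the uniform bound $\frac{1}{\ln k}\leq\frac{1}{i\ln d}$, sum the geometric series in $k$ in closed form to get $\frac{\lambda e^{-1}}{1-e^{-\lambda}}$ with $\lambda=c^i$, and then need the extra (easy) fact that $\lambda\mapsto\lambda/(1-e^{-\lambda})$ is increasing, evaluated at $\lambda=1$. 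Your route handles the non-integrality of $d^i$ cleanly via the ceiling in the summation index, a point the paper's block decomposition glosses over, at the price of one monotonicity verification; both arguments land on the same constant $k_1=\frac{e^{-1}}{1-e^{-1}}$.
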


\begin{proof}

\begin{align*}
c^i\sum_{k\geq d^i}\frac{1}{\ln k}e^{-c^i k}=&c^i\sum_{\ell=1}^\infty \sum_{k=\ell d^i}^{(\ell+1) d^i-1}\frac{1}{\ln k}e^{-c^i k}\\
\leq& c^i\sum_{\ell=1}^\infty d^ie^{-c^i \ell d^i}\frac{1}{\ln (\ell d^i)}\\
\leq &\sum_{\ell=1}^\infty e^{- \ell }\frac{1}{i\ln d}\\
=&\frac{1}{i}\frac{e^{-1}}{(1-e^{-1})\ln d}.
\end{align*}

\end{proof}

Let $c=e^{-(\mu-\epsilon)}$ and therefore,
$\ln d=\mu-\epsilon.$
Using Lemma \ref{series} in equation (\ref{delta}),
\begin{align*}
E(e^{-c^iZ})\geq 1-(1-e^{-c^ik_0})-(1+\delta) \frac{k_1}{i}.    
\end{align*}
Thus,
\begin{align*}
    \tilde L_n=&\prod_{i=1}^n E(e^{-c^iZ_i})\\
    \geq &\prod_{i=1}^{i_0} E(e^{-c^iZ_i})\prod_{i=i_0+1}^n\left( 1-(1-e^{-c^ik_0})-(1+\delta) \frac{k_1}{i}\right)
\end{align*}
For $i_0$ large enough and $i\geq i_0$,
$$\ln\left( 1-(1-e^{-c^ik_0})-(1+\delta) \frac{k_1}{i}\right)\sim  -(1-e^{-c^ik_0})-(1+\delta) \frac{k_1}{i}.$$
Observe that the series with general term $(1-e^{-c^ik_0})$ converges and that
$$\sum_{i=1}^n \frac{1}{i}\sim \ln n.$$
Thus, for $n$ large enough
$$ \tilde L_n\geq \frac{k_2}{n^\alpha},$$
where $k_2$ is a strictly positive constant and
$$\alpha=(1+\delta)k_1.$$
By taking $\delta>0$ small enough we get $\alpha<1$.
Therefore, the series with general term $\tilde L_n$ diverges. We let $c=e^{-(\mu-\epsilon)}$ in $\tilde L_n$. By Jensen's inequality,
$$\tilde L_n^{1+\epsilon}\leq L_n.$$
Hence, the series with general term $L_n$ diverges provided $\epsilon>0$ is taken small enough. This completes the proof of Theorem 1.

\end{proof}

\subsection{Proof of Theorem 2}

Recall that for $n\geq 1$,
$$I_n=E\left [\exp\left(\sum_{i=1}^{n}Z_{i+1}\ln\left(1-\prod_{j=1}^i\beta_j\right)\right)\right].$$
and $P(X_n=1)=p_1I_{n-1}.$
Let
$$B_n=\{-\frac{1}{i}\sum_{i=1}^j \ln\beta_j\leq \mu+\ee\,\quad \forall i>K\ln n\},$$
for some fixed $K>0$.
Note that on $B_n$,
\begin{align*}
    \ln\left(1-\prod_{j=1}^i\beta_j\right)\leq &\ln\left(1-e^{-i(\mu+\ee)}\right)\\
    \leq & -e^{-i(\mu+\ee)}\\
\end{align*}

Therefore,
$$I_n\leq E\left[\exp\left(-\sum_{i=K\ln n}^nZ_{i+1}e^{-i(\mu+\ee)}\right)\right]+P(B^c_n).$$
It follows from Hypothesis 2 that for any $\ee>0$ there exists an $\alpha>0$ such that
for all $i\geq 1$,
$$P(\overline W_i>\mu+\ee)\leq e^{-\alpha i}.$$
Hence,
$$P(B^c_n)\leq \frac{1}{(1-e^{-\alpha})n^{\alpha K}}.$$
Thus, by taking $K$ such that $\alpha K>1$ the series with general term $P(B^c_n)$ converges. Therefore, to show transience it is enough to prove that
$$\sum_{n\geq 1} U_n<+\infty,$$
where
$$U_n=E\left[\exp\left(-\sum_{i=K\ln n}^nZ_{i+1}e^{-i(\mu+\ee)}\right)\right].$$
Using that the sequence $(Z_n)$ is i.i.d. 
$$U_n=\prod_{i=K\ln n}^nE\left[\exp\left(- e^{-i(\mu+\ee)}Z\right)\right].$$
Let
$$v_i= -\ln E[\exp(-\lambda_i Z)]$$
where
$\lambda_i= e^{-i(\mu+\ee)}$
and
$$V_n=\sum_{i=K\ln n}^n v_i.$$
Then, $U_n=\exp(-V_n)$. Using Lemma \ref{Laplace},
\begin{align*}
    v_i=&-\ln\left[1- (e^{\lambda_i}-1)\sum_{k\geq 1}e^{-\lambda_i k}P(Z\geq k)\right]\\
    \geq& (e^{\lambda_i}-1)\sum_{k\geq 1}e^{-\lambda_i k}P(Z\geq k)
\end{align*}
Let
$$\mu'=\frac{1}{2}\left(\liminf_{k}kP(\ln Z>k)+\mu\right).$$
Using that $\liminf_{k}kP(\ln Z>k)>\mu$, there exists $k_3$ such that for $k\geq k_3$,
$$P(\ln Z\geq \ln k)\geq \frac{\mu'}{\ln k}.$$
Let $B=e^{\mu+\epsilon}$ and $\delta>0$.
For $i$ large enough,
\begin{align*}
    v_i\geq \mu'\lambda_i\sum_{k\geq \delta B^i}\frac{e^{-\lambda_i k}}{\ln k}
\end{align*}
We estimate this series using an integral and the fact that $\lambda_i B^i=1$.
\begin{align*}
   v_i\geq& \mu'\lambda_i\int_{\delta B^i}^\infty \frac{e^{-\lambda_i x}}{\ln x}dx\\
    =&\mu'\lambda_i B^i\int_{\delta}^\infty \frac{e^{- x}}{\ln x+i\ln B}dx\\
    =&\frac{\mu'}{i\ln B}\int_{\delta}^\infty \frac{i\ln B }{\ln x+i\ln B}e^{- x}dx
\end{align*}
By the Monotone Convergence Theorem,
\begin{align*}
  \lim_{i\to\infty}  \int_{\delta}^\infty \frac{i\ln B }{\ln x+i\ln B}e^{- x}dx=&
  \int_{\delta}^\infty e^{- x}dx\\
  =& e^{- \delta}
\end{align*}
Let $N_1$ such that for $i\geq N_1$,
\begin{align*}
  \int_{\delta}^\infty \frac{i\ln B }{\ln x+i\ln B}e^{-(1-\epsilon) x}dx\geq &e^{- \delta}-\epsilon\\
  \geq& 1-\delta-\epsilon\\
  =&1-2\epsilon,
\end{align*}
where we let $\delta=\epsilon.$
Hence, for $i$ large enough,
\begin{align*}
 v_i\geq &\frac{\mu'}{i\ln B} (1-2\epsilon)\\
 =&\frac{\mu'}{\mu+\epsilon}(1-2\epsilon) \frac{1}{i}.
\end{align*}
Let
$$k_4=\frac{\mu'}{\mu+\epsilon}(1-2\epsilon).$$
Since $\mu'>\mu$, by taking $\epsilon>0$ small enough we get $k_4>1$.
Finally, for $n$ large enough
\begin{align*}
V_n=&\sum_{i=K\ln n}^n v_i\\  
\geq & \sum_{i=K\ln n}^n \frac{k_4}{i}\\
\geq  & k_4\left(\ln n-\ln(K\ln n)\right )-\epsilon
\end{align*}
Hence, 
\begin{align*}
    U_n\leq & \exp(-V_n)\\
    \leq& \exp\left(-k_4\left(\ln n-\ln(K\ln n)\right )+\epsilon\right)\\
    =&e^{\epsilon}(\frac{n}{K\ln n})^{-k_4}.
\end{align*}
This shows that the series with general term $U_n$ converges and completes the proof of Theorem 2.

\subsection{Proof of Theorem 3}

Define for $1\leq i\leq n$,
$$\beta_{i,n}=\prod_{j=i+1}^n \beta_j.$$
By convention $\beta_{n,n}$ is set to be 1.
Let $0<s<1$ and for $1\leq i\leq n$, 
$$s_i=1-(1-s)\beta_{i,n}.$$
Note that $s_n=s$. Recall that
$$X_n=B_{n-1}+Z_n.$$
Given ${\cal F}_n$, $B_{n-1}$ is a binomial random variable with parameters $X_{n-1}$ and $\beta_n$. Thus,
\begin{align*}
E(s_n^{X_n}|{\cal F}_n)=&s_n^{Z_n}E\left[\left(1-\beta_n\right)^{X_{n-1}}|{\cal F}_n\right]\\
=&s_n^{Z_n}E\left[s_{n-1}^{X_{n-1}}|{\cal F}_n\right]
\end{align*}

Iterating the preceding equality we get,
\begin{align*}
 E(s_n^{X_n}|{\cal F}_n)=&s_n^{Z_n}s_{n-1}^{Z_{n-1}}E\left[s_{n-2}^{X_{n-2}}|{\cal F}_n\right]\\
 =&s_n^{Z_n}s_{n-1}^{Z_{n-1}}\dots s_1^{Z_1}\\
\end{align*}
Hence,
\begin{align*}
    E(s^{X_n})=&E\left[\exp\left(\sum_{i=1}^n Z_i\ln\left(1-\beta_{i,n}(1-s)\right)\right)\right]\\
    =&E\left[\exp\left(\sum_{i=1}^n Z_i\ln\left(1-\beta_{1,i}(1-s)\right)\right)\right]
\end{align*}

We now prove the direct implication in Theorem 3. That is, if $E(\ln Z)<+\infty$ then the process $(X_n)$ is positive recurrent.

Note that $\beta_{1,i}$ converges to 0 a.s. as $i$ goes to infinity. Hence, to show convergence of $\sum_{i=1}^n Z_i\ln\left(1-\beta_{1,i}(1-s)\right)$ it is enough to show convergence of $S_n\equiv\sum_{i=1}^n Z_i\beta_{1,i}$. Let ${\cal F}_Z$ be the $\sigma$-algebra generated by the sequence $(Z_n)$. Then,
\begin{align*}
    E\left(S_n|{\cal F}_Z\right)=&\sum_{i=1}^n Z_iE(\beta_{1,i})\\
    =&\sum_{i=1}^n Z_iE(\beta)^{i-1}.
\end{align*}
Since $(S_n)$ is an increasing sequence,
\begin{align*}
\lim_{n\to\infty}E\left(S_n|{\cal F}_Z\right)=&E\left(\lim_{n\to\infty}S_n|{\cal F}_Z\right)\\
=&\sum_{i=1}^\infty Z_iE(\beta)^{i-1}
\end{align*}

Thus, in order to prove that $(S_n)$ converges a.s. we will show that the series $\sum_{i\geq 1}
Z_ib^{i-1}$ converges for any $0<b<1$. 

\begin{lem}
\label{logmoment}
Let $(Z_i)$ be an i.i.d. sequence with support in $\N$.  The series 
$$\sum_{i\geq 1}
Z_ib^{i}$$ converges almost surely for all $b$ in $(0,1)$ if and only if $E(\ln Z)<+\infty$.
\end{lem}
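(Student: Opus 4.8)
The plan is to reduce the statement to the two Borel--Cantelli lemmas via the classical equivalence: for a nonnegative random variable $Y$ and any fixed $\epsilon>0$, one has $E(Y)<\infty$ if and only if $\sum_{i\geq 1}P(Y>\epsilon i)<\infty$, which follows by comparing the sum with $\int_0^\infty P(Y>\epsilon t)\,dt = E(Y)/\epsilon$. We apply this with $Y=\ln Z\geq 0$ (recall $Z\geq 1$).

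\emph{The ``if'' direction.} Suppose $E(\ln Z)<\infty$. Then for every $\epsilon>0$ we have $\sum_i P(\ln Z_i>\epsilon i)=\sum_i P(\ln Z>\epsilon i)<\infty$, so by the first Borel--Cantelli lemma, almost surely $\ln Z_i\leq\epsilon i$ for all large $i$; hence $\limsup_i i^{-1}\ln Z_i\leq\epsilon$. Letting $\epsilon\downarrow 0$ along a sequence gives $\limsup_i i^{-1}\ln Z_i=0$ a.s. (the limsup is $\geq 0$ since $Z_i\geq 1$). Therefore, for any $b\in(0,1)$, $\limsup_i (Z_ib^i)^{1/i}=b\exp(\limsup_i i^{-1}\ln Z_i)=b<1$, and the root test gives a.s. absolute convergence of $\sum_i Z_ib^i$.

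\emph{The ``only if'' direction.} Suppose $E(\ln Z)=\infty$. Fix $b\in(0,1)$ and write $b=e^{-c}$ with $c>0$. Taking $\epsilon=2c$ we get $\sum_i P(\ln Z_i>\epsilon i)=\infty$, and since the $Z_i$ are independent, the second Borel--Cantelli lemma yields $\ln Z_i>2ci$, i.e. $Z_ib^i>e^{ci}\to\infty$, for infinitely many $i$, almost surely. Thus the general term of $\sum_i Z_ib^i$ does not tend to $0$, so the series diverges a.s.; since this holds for every $b\in(0,1)$, the stated equivalence follows. (Alternatively, $\{\sum_i Z_ib^i<\infty\}$ is a tail event, so Kolmogorov's zero--one law already forces its probability to be $0$ or $1$, but the Borel--Cantelli argument pins it down directly.)

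The only point requiring care is keeping the two halves of the $\epsilon$-equivalence straight --- we need $\sum_i P(\ln Z>\epsilon i)<\infty$ for \emph{every} $\epsilon$ in the first half, and $=\infty$ for a single conveniently chosen $\epsilon$ in the second --- together with the (routine) invocation of independence in the second Borel--Cantelli lemma. Once these are in place the remainder is just the root test and a divergence-of-the-general-term argument, so I do not anticipate any real obstacle.
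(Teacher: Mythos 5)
Your proof is correct and follows essentially the same route as the paper's: both directions rest on the equivalence $E(\ln Z)<\infty \iff \sum_i P(\ln Z>c\,i)<\infty$ together with the first Borel--Cantelli lemma for convergence and the second for divergence. Your write-up is slightly more explicit (root test for the ``if'' direction, a single null set working for all $b$ simultaneously), but the underlying argument is the same.
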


\begin{proof}

Assume that $E(\ln Z)<+\infty$. Then, for any constant $c>0$
$$\sum_{i=1}^\infty P(\ln Z_i>ci)<\infty.$$
Hence, by Borel-Cantelli Lemma 
$$P(\ln Z_i>ci \mbox{ i.o.})=0.$$
Since
$$\sum_{i\geq 1}
Z_ib^{i}=\sum_{i\geq 1} \exp(\ln Z_i+i\ln b),$$
we see that $\sum_{i\geq 1}
Z_ib^{i}<+\infty$ for all $0<b<1$.

Conversely, assume that $E(\ln Z)=+\infty$. Then, for any $b$ in $(0,1)$,
$$\sum_{i\geq 1}P(\ln Z_i\geq -i\ln b)=+\infty.$$
Hence, by the second Borel-Cantelli Lemma,
$$P(\ln Z_i\geq -i\ln b\mbox{ i.o.})=1.$$
Thus, $\sum_{i=1}^n b^iZ_i=+\infty.$
This concludes the proof of the Lemma and of the direct implication in Theorem 3.

We now turn to the converse in Theorem 3. Assume that $E(\ln Z)=+\infty$. 
We will show that for $0<s<1$, $E(s^{X_n})$ converges to $0$. This is enough to show that $(X_n)$ is not positive recurrent.  
Recall that
\begin{align*}
    E(s^{X_n})=E\left[\exp\left(\sum_{i=1}^n Z_i\ln\left(1-\beta_{1,i}(1-s)\right)\right)\right]
\end{align*}
Note that 
$-\sum_{i=1}^n Z_i\ln\left(1-\beta_{1,i}(1-s)\right)=+\infty$ if and only if $\sum_{i=1}^n Z_i\beta_{1,i}=+\infty$.
For $i\geq 1$, let
$$W_i=\sum_{j=1}^i -\ln \beta_j.$$
For $N\geq 1$, let
$$B_N=\{W_i\leq i(\mu+1)\mbox{ for all }i\geq N\},$$
where $\mu=E(-\ln \beta)$. For $n\geq N$,
\begin{align*}
  \sum_{i=1}^n Z_i\beta_{1,i}&=\sum_{i=1}^n Z_i\exp(-W_i)\\
  &\geq  \sum_{i=N}^n Z_i\exp(-W_i)\\
  &\geq {\bf 1}_{B_N} \sum_{i=N}^n Z_i\exp(-i(\mu+1))
\end{align*}
Note that ${\bf 1}_{B_N}$ converges to 1 as $N$ goes to infinity. By Lemma \ref{logmoment} we know that
$$\sum_{i=1}^n b^iZ_i=+\infty,$$
for any $0<b<1$. Hence, $\sum_{i=1}^n Z_i\beta_{1,i}=+\infty$ a.s. and $E(s^{X_n})$ converges to $0$ as $n$ goes to infinity for any $0<s<1$.
This completes the proof of Theorem 3.
\end{proof}

\section{ Neuts' model}

In this section we show that Theorems 1, 2, and 3 are true for Neuts' model. Actually, the Markov chain $(Y_n)$ that we construct below generalizes Neut's model in that the sequence $(\beta_i)$ is i.i.d.  

Let $(G_n)$ be an i.i.d. sequence of random variables with support on $\N$ and such that
$P(G>k)=p^{k-1}$ for all $k\in\N$.
For $k\geq 1$, let
$$T_k=\sum_{i=1}^k G_i.$$
Let $(\beta_i)_{i\geq 1}$ be a sequence of i.i.d. random variables with support in $(0,1)$. For $i\geq 1$ let $(\xi_{i,j})_{j\geq 1}$ be a sequence of independent Bernoulli random variables with parameter $\beta_i$. All these sequences (for different $i$'s) are independent of each other. Let $(Z_n)$ be a sequence of i.i.d. random variables with support in $\N$. We define the process $(Y_n)$ as follows. By convention, in what follows any sum from $k$ to $\ell$ where $k>\ell$ is set to 0.

\begin{itemize}
    \item $Y_0=Z_0$.
    \item If $n=T_k$ for some $k\geq 1$ and if $Y_{n-1}\geq 1$,
    $$Y_n=\sum_{j=1}^{Y_{n-1}}\xi_{n,j}.$$
    \item If $n\not =T_k$ for all $k\geq 1$ then
    $$Y_n=Y_{n-1}+Z_n.$$
\end{itemize}

We now construct a Markov chain $(X_n')$ on the same probability space. For $n\geq 1$, let $T'_n=T_n-1$ and define
$$X_n'=Y_{T_n'}.$$
Let 
$$Z_n'=\sum_{k=T_{n-1}+1}^{T_n-1} Z_k.$$
Then,
\begin{equation*}
    X_n'=\sum_{j=1}^{X_{n-1}}\xi_{n,j}+Z_n'.
\end{equation*}

Note that $(Z_n')$ is an i.i.d. sequence for which $Z'$ has the same distribution as
$$\sum_{i=1}^{G'}Z_i,$$
where $P(G'\geq k)=p^k$ for $k\geq 0$. Observe now that the conditions on the distribution of $Z$ in Theorems 1, 2 and 3 are equivalent to the conditions on the distribution of $Z'$. Hence, Theorems 1, 2 and 3 apply to the chain $(X_n')$. Since $X_n'=Y_{T_n'}$, these theorems apply to 
 Neut's chain $(Y_n)$ as well.

\bibliographystyle{amsplain}

{\bf Acknowledgments} We thank two anonymous referees for their careful reading and constructive suggestions.
FM was partially supported by CNPq (303699/2018-3) and Fapesp (17/10555-0).
LRF was partially supported by CNPq (307884/2019-8) and Fapesp (2017/10555-0). R.B.S. visit to the University of S\~ao Paulo was supported by Fapesp (17/10555-0).
\begin{figure}[ht] 
\includegraphics[scale=0.5]{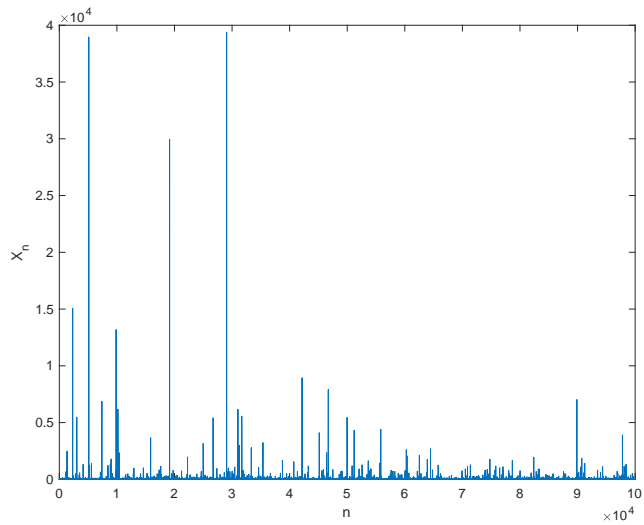}
\caption{ This is a simulation of the Markov chain $(X_n)$ for $1\leq n\leq 10^5$. The distribution of $\beta$ is uniform on $(0,1)$ and the distribution of $Z$ is given by $P(Z=k)=C/k^2$ for $k\geq 1$. Hence, $E(Z)=+\infty$ and $E(\ln Z)<+\infty$. The chain is positive recurrent. The average of $X_n$ over all $n$ in the range is 14.45. The maximum of $X_n$ is 39388.}
\end{figure} 

\end{document}